\newtheorem{thm}{Theorem}[section]
\newtheorem{cor}[thm]{Corolary}
\newtheorem{dfn}[thm]{Definition}
\newtheorem{obe}[thm]{Remark}
\author{Fabio Silva Botelho \\ Department of Mathematics \\  Federal University of Santa Catarina, UFSC \\
Florian\'{o}polis, SC - Brazil}
\title{\bf  A note on the Korn inequality in a n-dimensional context}
\date{}
\begin{document}
\maketitle

\abstract{In this short communication, we present a new proof for the Korn inequality in a n-dimensional context.
The results are based on standard tools of real and functional analysis. For the final result the standard Poincar\'{e} inequality plays a fundamental role.}

\section{Introduction}
In this article we present a proof for the Korn inequality in $\mathbb{R}^n$. The results are based on standard tools of functional analysis and Sobolev spaces theory.

We highlight such a proof is relatively simple and easy to follow since is established in a very transparent and clear fashion.

\begin{obe} Generically throughout the text we denote
$$\|u\|_{0,2,\Omega}=\left(\int_\Omega |u|^2\;dx\right)^{1/2},\; \forall u \in L^2(\Omega),$$
and
$$\|u\|_{0,2,\Omega}=\left( \sum_{j=1}^n \|u_j\|_{0,2,\Omega}^2\right)^{1/2},\; \forall u=(u_1,\ldots,u_n) \in L^2(\Omega;\mathbb{R}^n).$$
Moreover,
$$\|u\|_{1,2,\Omega}=\left( \|u\|_{0,2,\Omega}^2+\sum_{j=1}^n \|u_{x_j}\|_{0,2,\Omega}^2\right)^{1/2},\; \forall u \in W^{1,2}(\Omega),$$
where we shall also refer throughout the text to the  well known corresponding analogous norm for $u \in W^{1,2}(\Omega;\mathbb{R}^n).$
\end{obe}

At this point, we first introduce the following definition.
\begin{dfn} Let $\Omega \subset \mathbb{R}^n$ be an open, bounded set. We say that $\partial \Omega$ is $\hat{C}^1$ if such a manifold is oriented and for each  $x_0 \in \partial \Omega$, denoting $\hat{x}=(x_1,...,x_{n-1})$ for a local coordinate system compatible with the manifold $\partial \Omega$ orientation,
there exist $r>0$ and a function $f(x_1,...,x_{n-1})=f(\hat{x})$ such that
$$W=\overline{\Omega} \cap B_r(x_0)=\{x \in B_r(x_0)\;|\; x_n \leq f(x_1,...,x_{n-1})\}.$$ Moreover $f(\hat{x})$ is  a Lipschitz continuous function,  so that
$$|f(\hat{x})-f(\hat{y})| \leq C_1 |\hat{x}-\hat{y}|_2, \text{ on its domain},$$ for some $C_1>0$. Finally, we assume
$$\left\{\frac{\partial f(\hat{x})}{\partial x_k}\right\}_{k=1}^{n-1}$$ is classically defined,  almost everywhere also on its concerning domain, so that $f \in W^{1,2}$.
\end{dfn}
At this point we recall the following result found in \cite{512}, at page 222 in its Chapter 11.
\begin{thm}
\label{ppp.18} Assume $\Omega \subset \mathbb{R}^n$ is an open bounded set, and that
$\partial \Omega$ is $\hat{C}^1$. Let $1\leq p < \infty,$ and let $V$ be a bounded open set such that $\Omega\subset\subset V$.
Then there exists a bounded linear operator $$E: W^{1,p}(\Omega) \rightarrow W^{1,p}(\mathbb{R}^n),$$
such that
for each $ u \in W^{1,p}(\Omega)$ we have:
\begin{enumerate}
\item $Eu=u, \text{ a.e. in } \Omega,$
\item $Eu $ has support in $V$,
and
\item $\|Eu\|_{1,p,\mathbb{R}^n}\leq C \|u\|_{1,p,\Omega},$
where the constant depend only on $p,\Omega, \text{ and } V.$
\end{enumerate}
\end{thm}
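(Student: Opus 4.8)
The plan is to establish the extension locally near $\partial\Omega$ by flattening the boundary and reflecting, and then to globalize with a partition of unity. Since $\overline\Omega$ is compact and $\partial\Omega$ is $\hat C^1$, for each $x_0 \in \partial\Omega$ I would fix $r=r(x_0)>0$, the local coordinates $\hat x=(x_1,\dots,x_{n-1})$, and the Lipschitz graph function $f$ with $W = \overline\Omega \cap B_r(x_0) = \{x\in B_r(x_0) : x_n \le f(\hat x)\}$. Next I would introduce the change of variables $\Phi(\hat x, x_n) = (\hat x,\, f(\hat x) - x_n)$, which is bi-Lipschitz with constants controlled by $C_1$ and maps $W$ into the half-space $\{y_n \ge 0\}$, carrying the boundary piece into $\{y_n=0\}$. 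For $u \in W^{1,p}(\Omega)$ set $v = u\circ\Phi^{-1}$ on $\Phi(W)$; by the change-of-variables rule for Sobolev functions under bi-Lipschitz maps, $v \in W^{1,p}(\Phi(W))$ with $\|v\|_{1,p} \le C(C_1,n,p)\,\|u\|_{1,p,W}$.

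I would then extend $v$ across the flat interface by even reflection: for $y_n>0$ put $\bar v(\hat y,y_n)=v(\hat y,y_n)$ and for $y_n<0$ put $\bar v(\hat y,y_n)=v(\hat y,-y_n)$. The key point is that $\bar v$ lies in $W^{1,p}$ of the full ball, with $\|\bar v\|_{1,p}\le 2^{1/p}\|v\|_{1,p}$ up to the measure-distortion constant: one checks that the distributional derivatives of $\bar v$ are exactly the even reflection of $\partial_{y_j}v$ for $j\le n-1$ and the odd reflection of $\partial_{y_n}v$, with no singular term concentrated on $\{y_n=0\}$ precisely because the traces of $v$ from the two sides agree (for $W^{1,p}$, unlike $W^{k,p}$ with $k\ge2$, the simple even reflection already suffices). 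Transforming back through $\Phi$ yields a function $E^{x_0}u$ on $B_r(x_0)$ that agrees with $u$ on $\Omega\cap B_r(x_0)$ and satisfies $\|E^{x_0}u\|_{1,p,B_r(x_0)}\le C\|u\|_{1,p,\Omega}$; shrinking $r$ if necessary, we may assume $B_r(x_0)\subset V$.

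To globalize, cover $\partial\Omega$ by finitely many balls $B_{r_i}(x_0^i)$, $i=1,\dots,N$, as above, choose an open set $\Omega_0$ with $\overline{\Omega_0}\subset\Omega$ and $\overline\Omega \subset \Omega_0 \cup \bigcup_{i=1}^N B_{r_i}(x_0^i)=:U\subset\subset V$, and take a smooth partition of unity $\{\zeta_i\}_{i=0}^N$ subordinate to this open cover of $\overline\Omega$. Define
\[
 Eu := \zeta_0\, u \,+\, \sum_{i=1}^N E^{x_0^i}(\zeta_i u),
\]
where $\zeta_0 u$ is extended by zero outside $\Omega_0$ and, if desired, the whole sum is multiplied by a cutoff $\equiv 1$ near $\overline\Omega$ and supported in $V$. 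Since $\sum_{i=0}^N\zeta_i\equiv 1$ on $\overline\Omega$ and each $E^{x_0^i}(\zeta_i u)=\zeta_i u$ on $\Omega$, we obtain $Eu=u$ a.e.\ in $\Omega$; $Eu$ is supported in $U\subset\subset V$; and $E$ is linear and bounded as a finite sum of bounded linear operators, with $\|Eu\|_{1,p,\mathbb{R}^n}\le C\|u\|_{1,p,\Omega}$ for a constant $C=C(p,\Omega,V)$.

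The main obstacle is the step that transports the regularity and the norm estimate through the merely bi-Lipschitz straightening map $\Phi$ — that is, the chain rule and change-of-variables formula for $W^{1,p}$ functions composed with bi-Lipschitz homeomorphisms, together with the verification that even reflection across the flattened boundary contributes no interface term — because here $f$ is only Lipschitz rather than $C^1$. These points rest on the almost-everywhere differentiability of Lipschitz maps and on approximation by smooth functions; once they are in place, the remaining assembly is routine bookkeeping.
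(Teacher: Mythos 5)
Your argument is the standard flatten--reflect--patch proof (bi-Lipschitz straightening of the graph, even reflection across $\{y_n=0\}$, partition of unity), which is correct in outline and is essentially the approach behind the cited source: the paper itself gives no proof of this theorem, merely recalling it from \cite{512}. The only point worth tightening is the globalization step, where it is safer to set $Eu=\zeta_0 u+\sum_i \zeta_i\,E^{x_0^i}(u)$ (cutoff applied \emph{after} the local extension, as in the textbook version) so that extending by zero outside $B_{r_i}(x_0^i)$ cannot create a jump across $\partial B_{r_i}(x_0^i)\setminus\overline{\Omega}$; with that adjustment the remaining steps are the routine facts about composition of $W^{1,p}$ functions with bi-Lipschitz maps and reflection without an interface term that you already identify.
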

\begin{obe} Considering the proof of such a result, the constant $C>0$ may be also such that
$$\|E(e_{ij}(u))\|_{0,2,V} \leq C \|e_{ij}(u)\|_{0,2,\Omega},\; \forall u \in W^{1,2}(\Omega;\mathbb{R}^n),
\; \forall i,j \in \{1,\ldots,n\},$$ for the operator $e:W^{1,2}(\Omega;\mathbb{R}^n) \rightarrow L^2(\Omega;\mathbb{R}^{n \times n})$ specified in the next theorem.
\end{obe}

Finally, as the meaning is clear, we may simply denote $Eu=u.$

\section{ The main results, the Korn inequalities}

Our main result is summarized by the following theorem.

\begin{thm} Let $\Omega \subset \mathbb{R}^n$ be an open, bounded and connected set with a $\hat{C}^1$ (Lipschitzian) boundary $\partial \Omega$.

Define $e:W^{1,2}(\Omega;\mathbb{R}^n) \rightarrow L^2(\Omega;\mathbb{R}^{n \times n})$ by
$$e(u)=\{e_{ij}(u)\}$$ where $$e_{ij}(u)=\frac{1}{2}( u_{i,j}+u_{j,i}),\;\forall i,j \in \{1,\ldots,n\}.$$

Define also,
$$\|e(u)\|_{0,2,\Omega}=\left(\sum_{i=1}^n\sum_{j=1}^n\|e_{ij(u)}\|_{0,2,\Omega}^2\right)^{1/2}.$$

Let $L \in \mathbb{R}^+$ be such $V=[-L,L]^n$ is also such that $\overline{\Omega} \subset V^0.$

Under such hypotheses, there exists $C(\Omega,L) \in \mathbb{R}^+$ such that
$$\|u\|_{1,2,\Omega} \leq C(\Omega,L)\left( \|u\|_{0,2,\Omega}+\|e(u)\|_{0,2,\Omega}\right),\;\forall u \in W^{1,2}(\Omega;\mathbb{R}^n).$$
\end{thm}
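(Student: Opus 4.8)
The plan is to transfer the estimate from $\Omega$ to the whole space via the extension operator $E$ of Theorem~\ref{ppp.18}, and on $\mathbb{R}^n$ to exploit the elementary ``first Korn identity'' that comes from two integrations by parts; the Poincar\'e inequality will then be needed to make the extension step quantitatively compatible with the symmetrized gradient.

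First I would establish the identity on the whole space. For $\varphi\in C_c^\infty(\mathbb{R}^n;\mathbb{R}^n)$ one expands $\sum_{i,j}e_{ij}(\varphi)^2=\tfrac12\sum_{i,j}\varphi_{i,j}^2+\tfrac12\sum_{i,j}\varphi_{i,j}\varphi_{j,i}$, and integrating the last sum by parts twice gives $\int_{\mathbb{R}^n}\sum_{i,j}\varphi_{i,j}\varphi_{j,i}\,dx=\int_{\mathbb{R}^n}\bigl(\sum_i\varphi_{i,i}\bigr)^2\,dx\ge 0$; hence $\|\nabla\varphi\|_{0,2,\mathbb{R}^n}^2\le 2\,\|e(\varphi)\|_{0,2,\mathbb{R}^n}^2$. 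Since both sides are continuous for the $W^{1,2}$ norm and $C_c^\infty(\mathbb{R}^n;\mathbb{R}^n)$ is dense in $W^{1,2}(\mathbb{R}^n;\mathbb{R}^n)$, the inequality $\|\nabla w\|_{0,2,\mathbb{R}^n}\le\sqrt2\,\|e(w)\|_{0,2,\mathbb{R}^n}$ holds for every $w\in W^{1,2}(\mathbb{R}^n;\mathbb{R}^n)$.

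Next, given $u\in W^{1,2}(\Omega;\mathbb{R}^n)$, I would apply $E$ componentwise to obtain $\hat u=Eu\in W^{1,2}(\mathbb{R}^n;\mathbb{R}^n)$, which agrees with $u$ a.e.\ in $\Omega$, has support in $V^0$, and, by the Remark following Theorem~\ref{ppp.18}, satisfies $\|e_{ij}(\hat u)\|_{0,2,V}\le C\bigl(\|e_{ij}(u)\|_{0,2,\Omega}+\|u\|_{0,2,\Omega}\bigr)$ for all $i,j$. Restricting the first Korn inequality for $\hat u$ to $\Omega$ and using that $e(\hat u)$ is supported in $V$,
\[ \|\nabla u\|_{0,2,\Omega}\le\|\nabla\hat u\|_{0,2,\mathbb{R}^n}\le\sqrt2\,\|e(\hat u)\|_{0,2,V}\le\sqrt2\,C\,\bigl(\|e(u)\|_{0,2,\Omega}+\|u\|_{0,2,\Omega}\bigr). \]
Then $\|u\|_{1,2,\Omega}^2=\|u\|_{0,2,\Omega}^2+\|\nabla u\|_{0,2,\Omega}^2\le(1+2C^2)\bigl(\|u\|_{0,2,\Omega}+\|e(u)\|_{0,2,\Omega}\bigr)^2$, which is the assertion with $C(\Omega,L)=\sqrt{1+2C^2}$.

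The hard part will be the step hidden in the Remark to Theorem~\ref{ppp.18}: controlling the symmetrized gradient of the extension $\hat u$ on all of $V$ by $\|e(u)\|_{0,2,\Omega}$ plus the lower order term $\|u\|_{0,2,\Omega}$, rather than by $\|\nabla u\|_{0,2,\Omega}$ (which a bare reflection extension would force, making the whole argument circular). I expect this to require, in each boundary chart of the $\hat C^1$ manifold $\partial\Omega$, subtracting from $u$ a well-chosen infinitesimal rigid displacement $r(x)=a+Bx$ with $B$ skew, reflecting the remainder $u-r$, and controlling the local $W^{1,2}$ size of a vector field whose symmetric gradient is $e(u)$ in terms of $e(u)$ and an $L^2$ quantity; it is precisely in this local Korn--Poincar\'e type estimate that the standard Poincar\'e inequality enters in an essential way. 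Everything else above is the soft reduction via $E$ and the one-line identity of the second paragraph.
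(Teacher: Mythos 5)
Your first two paragraphs are sound: the identity $\int_{\mathbb{R}^n}\sum_{i,j}\varphi_{i,j}\varphi_{j,i}\,dx=\int_{\mathbb{R}^n}(\operatorname{div}\varphi)^2\,dx\ge 0$ does give $\|\nabla w\|_{0,2,\mathbb{R}^n}\le\sqrt2\,\|e(w)\|_{0,2,\mathbb{R}^n}$ on all of $W^{1,2}(\mathbb{R}^n;\mathbb{R}^n)$ by density, and the reduction of the theorem to an estimate on $e(Eu)$ is clean. But the step you lean on --- $\|e_{ij}(\hat u)\|_{0,2,V}\le C(\|e_{ij}(u)\|_{0,2,\Omega}+\|u\|_{0,2,\Omega})$ --- is not available, and it is not what the Remark after Theorem~\ref{ppp.18} asserts. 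The Remark bounds $\|E(e_{ij}(u))\|_{0,2,V}$, i.e.\ the extension of each scalar strain component taken separately; the resulting matrix field $\{E(e_{ij}(u))\}$ has no reason to be the symmetrized gradient of $Eu$ (or of any vector field) outside $\Omega$, since nothing forces it to satisfy the compatibility relations there. What the Extension Theorem itself gives is only $\|e(Eu)\|_{0,2,V}\le\|\nabla Eu\|_{0,2,\mathbb{R}^n}\le C\|u\|_{1,2,\Omega}$, which reinstates the full gradient of $u$ and makes the argument circular --- exactly the trap you flag in your last paragraph. The repair you sketch (subtract a rigid displacement in each boundary chart and invoke a ``local Korn--Poincar\'e type estimate'') is a legitimate textbook strategy, but that local estimate \emph{is} the second Korn inequality on the chart domain, so as written you have deferred the entire content of the theorem to an unproved lemma rather than proved it. The gap is therefore genuine.

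For comparison, the paper avoids ever needing to control $e(Eu)$. It argues by contradiction: a normalized sequence $v_k$ with $\|v_k\|_{1,2,\Omega}=1$ and $\|v_k\|_{0,2,\Omega}+\|e(v_k)\|_{0,2,\Omega}\to 0$ is fed into the distributional identity $\partial_j(\partial_l v_i)=\partial_j e_{il}(v)+\partial_l e_{ij}(v)-\partial_i e_{jl}(v)$, and a duality (negative-norm, Lions-type) estimate over test functions on $V$ is used to bound $\|\nabla v_k\|_{0,2,\Omega}$ by the $L^2$ norms of the $e_{ij}(v_k)$ and of $v_k$ alone; only the extensions of the \emph{scalar} functions $(v_k)_j$ and $e_{ij}(v_k)$ enter, which is precisely what the Remark supplies. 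If you want to salvage your route, you must either prove the local Korn--Poincar\'e estimate on a model domain from scratch (reflection plus the rigid-motion normalization you describe, carried out in detail), or switch to an argument of the paper's type in which the extension is applied only to scalars.
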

\begin{proof}
Suppose, to obtain contradiction, the concerning claim does not hold.

Hence, for each $k \in \mathbb{N}$ there exists $u_k \in W^{1,2}(\Omega;\mathbb{R}^n)$ such that
$$\|u_k\|_{1,2,\Omega} > k\left( \|u_k\|_{0,2,\Omega}+\|e(u_k)\|_{0,2,\Omega}\right).$$

In particular defining $$v_k=\frac{u_k}{\|u_k\|_{1,2,\Omega}}$$ we obtain
$$\|v_k\|_{1,2,\Omega}=1 >  k\left( \|v_k\|_{0,2,\Omega}+\|e(v_k)\|_{0,2,\Omega}\right),$$
so that
$$\left( \|v_k\|_{0,2,\Omega}+\|e(v_k)\|_{0,2,\Omega}\right)< \frac{1}{k},\; \forall k \in \mathbb{N}.$$
From this we have got,
$$\|v_k\|_{0,2,\Omega}< \frac{1}{k},$$
and
$$\|e_{ij}(v_k)\|_{0,2,\Omega}< \frac{1}{k},\; \forall k \in \mathbb{N},$$
so that
$$\|v_k\|_{0,2,\Omega} \rightarrow 0, \text{ as } k \rightarrow \infty,$$
and
$$\|e_{ij}(v_k)\|_{0,2,\Omega} \rightarrow 0, \text{ as } k \rightarrow \infty.$$

In particular $$\|(v_k)_{j,j}\|_{0,2,\Omega} \rightarrow 0,\; \forall j \in \{1,\ldots,n\}.$$

At this point we recall the following identity in the distributional sense, found in \cite{30}, page 12,
\begin{equation}\label{us309.1}\partial_j(\partial_l v_i)=\partial_j e_{il}(v)+\partial_l e_{ij}(v)-\partial_i e_{jl}(v), \; \forall i,j,l \in \{1,\ldots,n\}.\end{equation}

Fix $j \in \{1,\ldots,n\}$ and observe that $$\|(v_k)_j\|_{1,2,V} \leq C \|(v_k)_j\|_{1,2,\Omega}$$ so that
$$\frac{C}{\|(v_k)_j\|_{1,2,V}} \geq \frac{1}{\;\|(v_k)_j\|_{1,2,\Omega}},\; \forall k \in \mathbb{N}.$$

Hence,
\begin{eqnarray}
&&\|(v_k)_j\|_{1,2,\Omega} \nonumber \\ &=&\sup_{\varphi \in C^1(\Omega)}\left\{\langle \nabla (v_k)_j,\nabla \varphi \rangle_{L^2(\Omega)}+ \langle  (v_k)_j, \varphi \rangle_{L^2(\Omega)}\;:\; \|\varphi\|_{1,2,\Omega} \leq 1\right\}
\nonumber \\ &=& \left\langle \nabla (v_k)_j,\nabla \left( \frac{(v_k)_j}{\|(v_k)_j\|_{1,2,\Omega}}\right) \right\rangle_{L^2(\Omega)} \nonumber \\ &&+ \left\langle  (v_k)_j, \left(\frac{(v_k)_j}{\|(v_k)_j\|_{1,2,\Omega}}\right) \right\rangle_{L^2(\Omega)}
\nonumber \\ &\leq&  C\left( \left\langle \nabla (v_k)_j,\nabla \left(\frac{(v_k)_j}{\|(v_k)_j\|_{1,2,V} }\right) \right\rangle_{L^2(V)} + \left\langle  (v_k)_j, \left(\frac{(v_k)_j}{\|(v_k)_j\|_{1,2,V}}\right) \right\rangle_{L^2(V)}\right)
\nonumber \\ &=& C \sup_{\varphi \in C^1_c(V)}\left\{ \langle \nabla (v_k)_j,\nabla \varphi \rangle_{L^2(V)}+ \langle  (v_k)_j, \varphi \rangle_{L^2(V)}\;:\; \|\varphi\|_{1,2,V} \leq 1\right\}.
\end{eqnarray}

Here, we recall that $C>0$ is the constant concerning the Extension Theorem \ref{ppp.18}.
From such results and (\ref{us309.1}), we have that
\begin{eqnarray}
&&\sup_{\varphi \in C^1(\Omega)}\left\{\langle \nabla (v_k)_j,\nabla \varphi \rangle_{L^2(\Omega)}+ \langle  (v_k)_j, \varphi \rangle_{L^2(\Omega)}\;:\; \|\varphi\|_{1,2,\Omega} \leq 1\right\}
\nonumber \\ &\leq& C\sup_{\varphi \in C^1_c(V)}\left\{ \langle \nabla (v_k)_j,\nabla \varphi \rangle_{L^2(V)}+ \langle  (v_k)_j, \varphi \rangle_{L^2(V)}\;:\; \|\varphi\|_{1,2,V} \leq 1\right\}
\nonumber \\ &=& C  \sup_{ \varphi \in C^1_c(V)}\left\{
\langle e_{jl}(v_k), \varphi_{,l}\rangle_{L^2(V)}+\langle e_{jl}(v_k), \varphi_{,l}\rangle_{L^2(V)}\right.
\nonumber \\ &&\left.-\langle e_{ll}(v_k),\varphi_{,j}\rangle_{L^2(V)}+\langle (v_k)_j,\varphi\rangle_{L^2(V)},\;:\;
\|\varphi\|_{1,2,V} \leq 1\right\}.
\end{eqnarray}

Therefore
\begin{eqnarray}
&&\|(v_k)_j\|_{\left(W^{1,2}(\Omega)\right)} \nonumber \\ &=&
\sup_{\varphi \in C^1(\Omega)}\{\langle \nabla (v_k)_j,\nabla \varphi \rangle_{L^2(\Omega)}+ \langle (v_k)_j, \varphi \rangle_{L^2(\Omega)}\;:\; \|\varphi\|_{1,2,\Omega} \leq 1\}\nonumber \\ &\leq& C\left( \sum_{l=1}^n \left\{\|e_{jl}(v_k)\|_{0,2,V}+\|e_{ll}(v_k)\|_{0,2,V}\right\} +\|(v_k)_j\|_{0,2,V}\right)
\nonumber \\ &\leq& C^2\left( \sum_{l=1}^n \left\{\|e_{jl}(v_k)\|_{0,2,\Omega}+\|e_{ll}(v_k)\|_{0,2,\Omega}\right\} + \|(v_k)_j\|_{0,2,\Omega}\right)
\nonumber \\ &<& \frac{7C^2}{k}.
\end{eqnarray}

Summarizing,

$$\|(v_k)_j\|_{\left(W^{1,2}(\Omega)\right)} < \frac{7C^2}{k}, \; \forall k \in \mathbb{N}.$$

Thus, similarly we may obtain $$\|(v_k)_{i,j}\|_{0,2,\Omega} \rightarrow 0,\; \forall i,j \in \{1,\ldots,n\}.$$

From this we have got $$\|v_k\|_{1,2,\Omega} \rightarrow 0, \text{ as } k \rightarrow \infty,$$
which contradicts $$\|v_k\|_{1,2,\Omega}=1,\; \forall k \in \mathbb{N}.$$

The proof is complete.
\end{proof}

\begin{cor}Let $\Omega \subset \mathbb{R}^n$ be an open, bounded and connected set with a $\hat{C}^1$ boundary $\partial \Omega$.
Define $e:W^{1,2}(\Omega;\mathbb{R}^n) \rightarrow L^2(\Omega;\mathbb{R}^{n \times n})$ by
$$e(u)=\{e_{ij}(u)\}$$ where $$e_{ij}(u)=\frac{1}{2}( u_{i,j}+u_{j,i}),\;\forall i,j \in \{1,\ldots,n\}.$$

Define also,
$$\|e(u)\|_{0,2,\Omega}=\left(\sum_{i=1}^n\sum_{j=1}^n\|e_{ij(u)}\|_{0,2,\Omega}^2 \right)^{1/2}.$$

Let $L \in \mathbb{R}^+$ be such $V=[-L,L]^n$ is also such that $\overline{\Omega} \subset V^0.$

Moreover, define $$\hat{H}_0=\{ u \in W^{1,2}(\Omega;\mathbb{R}^n)\::\; u=\mathbf{0}, \text{ on } \Gamma_0\},$$
where $\Gamma_0 \subset \partial \Omega$ is a measurable set such that the Lebesgue measure $m_{\mathbb{R}^{n-1}}(\Gamma_0)>0.$

Under such hypotheses, there exists $C(\Omega,L) \in \mathbb{R}^+$ such that
$$\|u\|_{1,2,\Omega} \leq C(\Omega,L)\; \|e(u)\|_{0,2,\Omega},\;\forall u \in \hat{H}_0.$$
\end{cor}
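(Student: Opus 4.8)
The plan is to combine the inequality of the preceding theorem with a compactness argument, so that the statement reduces to the assertion that the only infinitesimal rigid motion vanishing on $\Gamma_0$ is the trivial one.

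First I would argue by contradiction: if the claim fails, then for each $k\in\mathbb{N}$ there is $u_k\in\hat{H}_0$ with $\|u_k\|_{1,2,\Omega}=1$ and $\|e(u_k)\|_{0,2,\Omega}<1/k$, so that $e(u_k)\to\mathbf{0}$ in $L^2(\Omega;\mathbb{R}^{n\times n})$. The sequence $\{u_k\}$ is bounded in $W^{1,2}(\Omega;\mathbb{R}^n)$ and, since $\partial\Omega$ is $\hat{C}^1$, the Rellich--Kondrachov embedding $W^{1,2}(\Omega;\mathbb{R}^n)\hookrightarrow L^2(\Omega;\mathbb{R}^n)$ is compact; passing to a subsequence, still denoted $\{u_k\}$, we obtain $u_k\to u$ strongly in $L^2(\Omega;\mathbb{R}^n)$ for some $u$.

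Next I would apply the theorem of the previous section to the differences $u_k-u_m$, using linearity of $e$ and the triangle inequality, to get
$$\|u_k-u_m\|_{1,2,\Omega}\leq C(\Omega,L)\left(\|u_k-u_m\|_{0,2,\Omega}+\|e(u_k)\|_{0,2,\Omega}+\|e(u_m)\|_{0,2,\Omega}\right)\to 0$$
as $k,m\to\infty$. Hence $\{u_k\}$ is Cauchy in $W^{1,2}(\Omega;\mathbb{R}^n)$, so $u_k\to u$ strongly in $W^{1,2}(\Omega;\mathbb{R}^n)$, whence $\|u\|_{1,2,\Omega}=1$ and $e(u)=\lim_k e(u_k)=\mathbf{0}$. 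From the distributional identity (\ref{us309.1}), $e(u)=\mathbf{0}$ yields $\partial_j\partial_l u_i=0$ for all $i,j,l$, so on the connected set $\Omega$ each component of $u$ is affine; writing $u(x)=a+Bx$ and invoking $e(u)=\mathbf{0}$ once more forces $B$ to be skew-symmetric, i.e. $u$ is an infinitesimal rigid motion.

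Finally I would use the boundary condition. Since the trace operator $W^{1,2}(\Omega;\mathbb{R}^n)\to L^2(\partial\Omega;\mathbb{R}^n)$ is bounded, $\hat{H}_0$ is a closed subspace, so $u\in\hat{H}_0$ and the (continuous, affine) map $x\mapsto a+Bx$ vanishes on $\Gamma_0$. Its zero set is an affine subspace of $\mathbb{R}^n$; were its dimension at most $n-2$, that subspace would be $m_{\mathbb{R}^{n-1}}$-null, contradicting $m_{\mathbb{R}^{n-1}}(\Gamma_0)>0$, so the zero set is either all of $\mathbb{R}^n$, in which case $u\equiv\mathbf{0}$, or a hyperplane, in which case $B$ annihilates an $(n-1)$-dimensional linear subspace and therefore has rank at most one; since a real skew-symmetric matrix has even rank, $B=0$, and then $a=\mathbf{0}$. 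In every case $u=\mathbf{0}$, contradicting $\|u\|_{1,2,\Omega}=1$, and the proof is complete. The compactness portion is routine given the preceding theorem and the Rellich--Kondrachov embedding; the one genuinely substantive point — and the only place the hypothesis $m_{\mathbb{R}^{n-1}}(\Gamma_0)>0$ enters — is the algebraic fact that a skew-symmetric matrix annihilating an $(n-1)$-dimensional subspace must vanish.
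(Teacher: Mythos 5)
Your proposal is correct, but it takes a genuinely different route from the paper's. The paper also argues by contradiction and normalizes to obtain $\|e(v_k)\|_{0,2,\Omega}\to 0$, but it never extracts a limit function: it first notes $\|(v_k)_{j,j}\|_{0,2,\Omega}\to 0$, then invokes ``the standard Poincar\'{e} inequality proof'' together with $v_k=\mathbf{0}$ on $\Gamma_0$ to conclude $\|(v_k)_j\|_{0,2,\Omega}\to 0$, and finally reruns the extension/duality estimate from the main theorem to force $\|v_k\|_{1,2,\Omega}\to 0$, contradicting the normalization. You instead follow the classical compactness route: Rellich--Kondrachov gives a strong $L^2$ limit $u$, the first Korn inequality applied to differences upgrades this to strong $W^{1,2}$ convergence, the identity (\ref{us309.1}) and connectedness of $\Omega$ identify $u$ as an infinitesimal rigid motion $a+Bx$ with $B$ skew-symmetric, and the boundary condition kills it via the even-rank property of skew-symmetric matrices. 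What your approach buys is a completely explicit and localized use of the hypothesis $m_{\mathbb{R}^{n-1}}(\Gamma_0)>0$ (it enters only in the final algebraic step, and any measurable $\Gamma_0$ of positive surface measure works), at the price of invoking compactness of the embedding and closedness of $\hat{H}_0$ under the trace operator. The paper's approach avoids compactness and the rigid-motion characterization altogether, but its pivotal intermediate step --- deducing $\|(v_k)_j\|_{0,2,\Omega}\to 0$ from smallness of the single derivative $\partial_j(v_k)_j$ plus vanishing on $\Gamma_0$ --- is asserted without detail and is more delicate than the standard Poincar\'{e} argument it appeals to, which ordinarily requires control of the full gradient; in that respect your argument is the more robust of the two.
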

\begin{proof}
Suppose, to obtain contradiction, the concerning claim does not hold.

Hence, for each $k \in \mathbb{N}$ there exists $u_k \in \hat{H}_0$ such that
$$\|u_k\|_{1,2,\Omega} > k\;\|e(u_k)\|_{0,2,\Omega}.$$

In particular defining $$v_k=\frac{u_k}{\|u_k\|_{1,2,\Omega}}$$ similarly to the proof of the last theorem, we may obtain $$\|(v_k)_{j,j}\|_{0,2,\Omega} \rightarrow 0, \text{ as } k \rightarrow \infty,\; \forall j \in \{1,\ldots,n\}.$$

From this and from the standard Poincar\'{e} inequality proof we obtain
$$\|(v_k)_j\|_{0,2,\Omega} \rightarrow 0, \text{ as } k \rightarrow \infty,\; \forall j \in \{1,\ldots,n\}.$$
Thus, also similarly as in the proof of the last theorem,
  we may infer that $$\|v_k\|_{1,2,\Omega} \rightarrow 0, \text{ as } k \rightarrow \infty,$$
which contradicts $$\|v_k\|_{1,2,\Omega}=1,\; \forall k \in \mathbb{N}.$$

The proof is complete.
\end{proof}
\section{Conclusion} In this article we have developed a new proof for the Korn inequality in a specific n-dimensional context.

In a future research we intend to address more general models, including the corresponding results for manifolds in $\mathbb{R}^n$.

\end{document}